\def\Z{\mathbb Z}
\theoremstyle{plain}
\newtheorem{theorem}{Theorem}
\newtheorem{proposition}{Proposition}
\theoremstyle{definition}
\theoremstyle{remark}
\newtheorem{remark}{Remark}
\begin{document}

\title{The Quartic Residues Latin Square}
\author{Christian Aebi and Grant Cairns}
\address{Coll\`ege Calvin, Geneva, Switzerland 1211}
\email{christian.aebi@edu.ge.ch}
\address{La Trobe University, Melbourne, Australia 3086}
\email{G.Cairns@latrobe.edu.au}

\begin{abstract}
We establish an elementary, but rather striking  pattern concerning the quartic residues of primes $p$ that are congruent to 5 modulo 8.   Let $g$ be a generator of the multiplicative group of $\Z_p$ and let $M$ be the $4\times 4$ matrix whose $(i+1),(j+1)-$th entry is the number of elements $x$ of $\Z_p$ of the form $x\equiv g^k \pmod p$ where $k\equiv i \pmod 4$ and $\lfloor 4x/p \rfloor = j$, for $i,j=0,1,2,3$. We show that $M$ is a Latin square, provided the entries in the first row are distinct, and that $M$  is essentially independent of the choice of $g$. As an application, we prove that the  sum in $\Z$ of the quartic residues is
$
\frac{p}5(M_{11}+2M_{12}+3M_{13}+4M_{14})$.
\end{abstract}

\maketitle

\section{Introduction}
Let $p$ be prime with $p\equiv 5\pmod 8$,  identify the finite field $\Z_p$ with the set $\{0,1,\dots,p-1\}$, and let $g$ be an arbitrary generator of the multiplicative group $\Z_p^*$.  For $i=0,1,2,3$, let $B_i$ be the set of elements $x$ of $\Z_p^*$ of the form
$x\equiv g^k \pmod p$, where $k\equiv i\pmod 4$. Note that although $k$ is not unique,   its value modulo 4 is unique, by Fermat's little theorem. Now divide $\Z_p^*$ into 4 consecutive subintervals; for $j=0,\dots,3$, let 
\[
I_j=\left\{x\in \Z_p^*: \frac{jp}4 <x< \frac{( j+1)p}4\right\}.
\]
Consider the $4\times 4$ matrix $M$ whose $(i+1),(j+1)-$th entry  is the number of elements in the intersection $B_{i}\cap I_j$. 
Note that the first row counts the quartic residues in the intervals $I_0,I_1,I_2,I_3$.
For example, consider the prime $p=109$ and the generator $g=  6$. The quartic residues are:
\[
1,3,5,7,9,15,16,21,22,25,26,27\ | | \ 35,38,45,48,49\ | | \ 63,66,73,75,78,80,81\ | | \ 89,97,105.\]
One finds that $M$ is a Latin square:
 \[
M=\begin{bmatrix}  12&5&7&3\\
7&12&3&5\\
3&7&5&12\\
5&3&12&7
\end{bmatrix}.  
\]
This pattern is  surprising, as the sets $B_i$ are given by the multiplicative structure, while the sets $I_j$ are related to the additive structure. So the definition of $M$  is rather peculiar, if not downright perverse. Nevertheless, we will show that the pattern given above always occurs, provided  $p\equiv 5\pmod 8$, and $M$ is essentially independent of the choice of generator $g$.

 Recall that in $\Z_p^*$ the number of generators is given by the Euler function $\varphi(p-1)$. In particular, there is an even number of generators. In fact, half of the generators $g$ have the property $2\equiv g^k \pmod p$ where $k\equiv1 \pmod 4$, and half have $2\equiv g^k \pmod p$ with $k\equiv3 \pmod 4$. To see this, notice that $2$ is not a quadratic residue  as $p\equiv 5\pmod 8$, so if $2\equiv g^k \pmod p$, then $k$ is necessarily odd.  Furthermore, if $g$ is a generator, then so too is $g^{-1}$, and $g^k \equiv (g^{-1})^{-k}\equiv (g^{-1})^{p-1-k} \pmod p$, by Fermat's little theorem. And as $p\equiv1 \pmod 4$, one has $k\equiv1 \pmod 4$ if and only if $p-1-k\equiv3 \pmod 4$.

\begin{theorem}\label{squares} Suppose that  $p$ is prime with $p\equiv 5\pmod 8$ and choose a generator $g$ of the multiplicative group $\Z_p^*$. Then the matrix $M$ has one the following two forms:
\begin{equation}\label{M}
\begin{bmatrix}  a&b&c&d\\
c&a&d&b\\
d&c&b&a\\
b&d&a&c
\end{bmatrix}\qquad\text{or}\qquad  
\begin{bmatrix}  a&b&c&d\\
b&d&a&c\\
d&c&b&a\\
c&a&d&b
\end{bmatrix}, 
\end{equation}
depending on whether $2\equiv g^k \pmod p$ with $k\equiv1 \pmod 4$ or $k\equiv3 \pmod 4$ respectively. In particular, in both cases, if the entries in the first row are distinct, the matrix $M$ is a Latin square.
\end{theorem}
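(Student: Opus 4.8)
\section*{Proof proposal}

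The plan is to encode the entries of $M$ through character sums and to reduce the entire statement to a single identity relating two such sums. Let $\zeta=\sqrt{-1}\in\mathbb C$ and let $\chi$ be the quartic residue character of $\Z_p^*$ normalized by $\chi(g)=\zeta$, so that $B_r=\{x:\chi(x)=\zeta^r\}$ and the indicator of $B_r$ is $\mathbb 1_{B_r}(x)=\tfrac14\sum_{s=0}^3\zeta^{-rs}\chi(x)^s$. Writing $S_{s,j}=\sum_{x\in I_j}\chi(x)^s$, one gets $M_{(r+1)(j+1)}=\tfrac14\sum_{s=0}^3\zeta^{-rs}S_{s,j}$, so it suffices to pin down the four sums $S_{s,j}$ ($s=0,1,2,3$) for each interval. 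Since $p\equiv5\pmod8$ forces each $I_j$ to contain exactly $(p-1)/4$ integers, $S_{0,j}=(p-1)/4$ is independent of $j$; this common value contributes the same constant to every entry and may be ignored when checking which entries coincide.

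The first ingredient is the negation symmetry. Because $p\equiv5\pmod8$ we have $\chi(-1)=\zeta^{(p-1)/2}=\zeta^2=-1$, and $x\in I_j\iff -x\in I_{3-j}$; hence $S_{s,3-j}=\chi(-1)^sS_{s,j}=(-1)^sS_{s,j}$. For $s=2$ this gives $S_{2,0}=S_{2,3}$ and $S_{2,1}=S_{2,2}$, and combined with $\sum_jS_{2,j}=\sum_{x\ne0}\big(\tfrac xp\big)=0$ it yields $S_{2,1}=-S_{2,0}$; for $s=1$ it gives $S_{1,3}=-S_{1,0}$ and $S_{1,2}=-S_{1,1}$. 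Thus every $S_{s,j}$ is expressed in terms of $P:=S_{2,0}$ together with $U:=S_{1,0}$ and $V:=S_{1,1}$, and substituting back into the formula for $M$ shows that $M$ already exhibits the required pattern of repeated entries as soon as one knows the relation between $U$ and $V$: a short computation identifies the first form in \eqref{M} with $V=\zeta U$ and the second with $V=-\zeta U$.

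The crux is therefore the single identity $V=\chi(2)\,U$, and this is where I expect the real work to lie. I would prove it by splitting $T:=\sum_{0<x<p/2}\chi(x)=S_{1,0}+S_{1,1}$ according to the parity of $x$. The even part is $\sum_{0<y<p/4}\chi(2y)=\chi(2)S_{1,0}$; for the odd part I use the folding $x\mapsto p-x$, which carries the odd elements of $(0,p/2)$ bijectively onto the even elements of $(p/2,p)$ and contributes a factor $\chi(-1)=-1$, so the odd part equals $-\chi(2)S_{1,1}$. Hence $S_{1,0}+S_{1,1}=\chi(2)(S_{1,0}-S_{1,1})$, and since $\chi(2)^2=\chi(4)=-1$ (again exactly because $p\equiv5\pmod8$, which makes $2$ a non-residue and $4$ a non-quartic square) one has $\tfrac{\chi(2)-1}{\chi(2)+1}=\chi(2)$, giving $V=\chi(2)U$. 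As $\chi(2)=\zeta$ or $-\zeta$ according to whether $2\equiv g^k$ with $k\equiv1$ or $k\equiv3\pmod4$, this selects the two forms of \eqref{M}. Finally, in each of the two displayed matrices every row and every column is a rearrangement of the first row, so distinctness of $a,b,c,d$ makes $M$ a Latin square. The main obstacle is discovering the parity-splitting identity; once it is in hand, the negation symmetry and the normalization $\chi(2)^2=-1$ reduce the rest to routine bookkeeping.
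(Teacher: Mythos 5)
Your proposal is correct, and it takes a genuinely different route from the paper. The paper's argument is entirely elementary: it uses the negation map to show rows 3 and 4 are rows 1 and 2 reversed, then tracks the multiplication-by-2 bijections $B_i\to B_{i+1}$ through a subdivision of the intervals $I_j$ into halves and quarters, and finally pins down row 2 by comparing row sums with column sums (all of which equal $(p-1)/4$). You instead encode the entries by orthogonality of the quartic character, $M_{(r+1)(j+1)}=\tfrac14\sum_{s}\zeta^{-rs}S_{s,j}$, use the negation symmetry $S_{s,3-j}=(-1)^sS_{s,j}$ (together with $S_{3,j}=\overline{S_{1,j}}$ and $\sum_j S_{2,j}=0$) to reduce everything to the three quantities $U=S_{1,0}$, $V=S_{1,1}$, $P=S_{2,0}$, and then prove the single identity $U+V=\chi(2)(U-V)$ by the parity splitting of $\sum_{0<x<p/2}\chi(x)$; the algebraic identity $\tfrac{w-1}{w+1}=w$ for $w^2=-1$ then gives $V=\chi(2)U$, and I have checked that substituting back does reproduce exactly the two displayed forms according to $\chi(2)=\zeta$ or $-\zeta$. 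Your parity-splitting lemma is really the character-sum avatar of the paper's doubling bijection, but the bookkeeping is organized quite differently. What your approach buys: both cases $k\equiv1,3\pmod4$ are handled uniformly through $\chi(2)$, the role of the hypothesis $p\equiv5\pmod8$ becomes transparent (it is exactly what forces $\chi(-1)=-1$ and $\chi(2)^2=-1$), and the matrix is exhibited as a function of three classical character sums over intervals, which connects the result to known machinery. What the paper's approach buys: it is completely self-contained and avoids complex characters, using only counting and bijections, which fits the elementary spirit of the result. One small presentational gap in your write-up: you should state explicitly that $S_{3,j}=\overline{S_{1,j}}$ (and that the entries are real), since this is what lets the four sums per interval collapse to $U$, $V$, $P$; it is immediate from $\chi^3=\overline{\chi}$, but your reduction silently uses it.
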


\begin{remark} The first row of the matrix $M$ is independent of the generator $g$, as it depends only on the quartic residues. So the theorem shows that $M$ is independent of the choice of generator, up to a transposition of rows 2 and 4.
\end{remark}

The paper is organised as follows. In Section \ref{pf} we prove Theorem \ref{squares} and we  show how $M$ may be defined in a different manner.
In Section \ref{ss}, as an application of  Theorem \ref{squares}, we give a formula for the sum of the quartic residues, in terms of $p$ and the entries in the first row of $M$.

\section{Proof of Theorem  \ref{squares}}\label{pf}

\begin{proof}[Proof of Theorem  \ref{squares}]
Let $g$ be a generator of  $\Z_p^*$. First note that writing $p=8n+5$, one has $-1\equiv  g^{4n+2}$ in $\Z_p^*$. So modulo $p$, multiplication by $-1$ sends $B_i$ to $B_{i+2}$, for $i=0,1$. Hence, considering  the map $x\mapsto p-x$, we see that  the first row of $M$ equals the third row in reverse order, and visa versa. Similarly, the second row equals the fourth row in reverse order. To enable us to refer to this property below, let us record it formally: 
\begin{equation}\label{neg}
M_{ij}=M_{(i+2),(5-j)},
\end{equation}
where $1\leq i\leq 2$ and $1\leq j\leq 4$. So it remains to consider how row 2 is obtained from row 1.
Let us suppose that $2\equiv g^k \pmod p$ with $k\equiv1 \pmod 4$, so  modulo $p$, multiplication by 2  sends $B_i$ to $B_{i+1}$ for $i=0,1,2$.  (The case $k\equiv3 \pmod 4$ is entirely analogous; multiplication by 2  sends $B_i$ to $B_{i-1}$). We claim that 
\begin{equation}\label{rows12}
M_{21}=M_{13},\quad M_{22}=M_{11}.
\end{equation}
Let us see how \eqref{rows12} gives the theorem. To lighten the notation, let $M_{11}=a,M_{12}=b,M_{13}=c,M_{14}=d$.
By \eqref{neg} and \eqref{rows12}, $M$ has the following form:
\[
M=\begin{bmatrix}  a&b&c&d\\
c&a&*&*\\
d&c&b&a\\
*&*&a&c
\end{bmatrix}.  
\]
Now notice that the  rows and columns of $M$ all have the same sum, $(p-1)/4$. Comparing the first row with the four columns we obtain
the form of the left hand side of \eqref{M}.

It remains  to establish \eqref{rows12}. For this, we require some further notation. We will look at the bottom and top halves of the intervals $I_{j-1}$, and then  the various quarters of the $I_{j-1}$. For each $i,j,k=1,\dots,4$,
let 
\begin{align*}
b_{ij}&=|B_{i-1} \cap  \{x\in \Z_p^*: (j-1)p/4 < x<  (2j-1)p/8\}|,\\
t_{ij}&=|B_{i-1} \cap\{x\in \Z_p^*: (2j-1)p/8 < x<  jp/4\}|,\\
q_{jk}&=|B_{0} \cap\{x\in \Z_p^*: (4j-5+k)p/16 < x<  (4j-4+k)p/16\}|,
\end{align*}
so that by definition,
\begin{equation}\label{halves}
M_{ij}=b_{ij}+t_{ij}
\end{equation}
and 
\begin{equation}\label{quarters}
b_{1j}=q_{j1}+q_{j2}, \qquad t_{1j}=q_{j3}+q_{j4}.
\end{equation}
 
Considering the effect modulo $p$ of multiplication of the elements of $B_{i-1}$ by $2$, we have
for $i=1,2$ and $3$,
\begin{equation}\label{3s}
M_{(i+1)1} =b_{i1}+ b_{i3},\quad 
M_{(i+1)2} =t_{i1}+ t_{i3},\quad
M_{(i+1)3} =b_{i2}+ b_{i4},\quad
M_{(i+1)4} =t_{i2}+ t_{i4}.
\end{equation}
Similarly,
\begin{equation}\label{bs}
b_{22} =q_{13}+ q_{33},\qquad
b_{24} =q_{23}+ q_{43},
\end{equation}
and
\begin{equation}\label{ts}
t_{22} =q_{14}+ q_{34},\qquad
t_{24} =q_{24}+ q_{44}.
\end{equation}
Thus, \eqref{neg} with $i=1$, $j=1,2$, \eqref{3s} with $i=2$, \eqref{bs} and \eqref{ts}  give
\begin{align*}
M_{11}=M_{34}=t_{22}+ t_{24}= q_{14}+ q_{24}+ q_{34}+ q_{44},\\
M_{12}=M_{33}=b_{22}+ b_{24}= q_{13}+ q_{23}+ q_{33}+ q_{43}.
\end{align*}
Adding the above two equations and using \eqref{quarters}, we get
$M_{11}+M_{12}= t_{11}+t_{12}+t_{13}+t_{14}$,
and thus  by \eqref{halves},
\begin{equation}\label{plus}
b_{11}+b_{12}= t_{13}+t_{14}.
\end{equation}
As the first row of $M$ has the same sum as the first column of $M$, we have
\[
M_{12}+M_{13}+M_{14}=M_{21}+M_{31}+M_{41}.
\]
But by \eqref{neg}, we have $M_{31}=M_{14}$ and $M_{41}=M_{24}$. Hence, using \eqref{3s} with $i=1$,
we have $M_{12}+M_{13}=M_{21}+M_{24}=b_{11}+b_{13}+t_{12}+t_{14}$,
and hence by \eqref{halves},
\begin{equation}\label{minus}
b_{12}+t_{13}=b_{11}+t_{14},
\end{equation}
Solving \eqref{plus} and \eqref{minus} we obtain
$b_{11}=t_{13},b_{12}=t_{14}$.
Finally, by \eqref{halves} and \eqref{3s} with $i=1$,
\begin{align*}
M_{21}&=b_{11}+b_{13}=t_{13}+t_{13}= M_{13}\\
M_{22}&=t_{11}+t_{13}=b_{11}+t_{11}= M_{11}.
\end{align*}
which gives \eqref{rows12} as required.
\end{proof}

\begin{remark} As we saw in the proof of Theorem \ref{squares}, if $x$ is a quartic residue, so too is $-4x$, modulo $p$, and similarly, so too is $-x/4$. In particular,  $1$ is a quartic residue since $p\equiv 1\pmod 4$, and hence $p-4$ and  $(p-1)/4$ are quartic residues too.  
\end{remark}

We now show that the matrix $M$ can be defined in a different manner, by replacing the sets $I_0,\dots,I_3$ by a different partition of $\Z_p^*$ into 4 subsets of equal cardinality. For $j=1,\dots,4$, let $J_j=\{x\in \Z_p^* : x\equiv j \pmod{4}\}$. We will show that the distribution of the quartic residues amongst the intervals $I_0,\dots,I_3$ is identical to the distribution of the quartic residues amongst the conjugacy classes $J_1,\dots,J_4$.  

\begin{proposition} \label{mod} For all $i,j=1,\dots,4$, one has $M_{ij}= |B_{i-1} \cap J_j |$.
\end{proposition}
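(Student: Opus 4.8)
The plan is to exhibit a single bijection of $\Z_p^*$ that simultaneously carries each interval $I_{j-1}$ onto the residue class $J_j$ and fixes each quartic class $B_{i-1}$ setwise. Given such a map $\psi$, its restriction to $B_{i-1}$ is a bijection from $B_{i-1}\cap I_{j-1}$ onto $B_{i-1}\cap J_j$, and the claimed equality $M_{ij}=|B_{i-1}\cap I_{j-1}|=|B_{i-1}\cap J_j|$ is immediate. Guided by the preceding remark that $-4x$ is a quartic residue whenever $x$ is, I would take $\psi\colon x\mapsto -4x \bmod p$.

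First I would check that $\psi$ preserves each $B_i$. Since $p\equiv 5\pmod 8$, the element $2$ is a quadratic non-residue, so $2\equiv g^k$ with $k$ odd, whence $4\equiv g^{2k}$ with $2k\equiv 2\pmod 4$, i.e. $4\in B_2$. Also $-1\equiv g^{(p-1)/2}$ and $(p-1)/2\equiv 2\pmod 4$, so $-1\in B_2$. Therefore $-4\in B_{2+2}=B_0$, that is, $-4$ is a quartic residue. As $B_{i-1}$ is the coset $g^{i-1}B_0$ of the subgroup $B_0$ of fourth powers, multiplication by the element $-4\in B_0$ sends this coset to itself, so $\psi(B_{i-1})=B_{i-1}$ for each $i$.

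Next I would compute the effect of $\psi$ on the intervals. For $x\in I_{j-1}$ one has $(j-1)p/4<x<jp/4$, hence $4x\in\bigl((j-1)p,\,jp\bigr)$ and so $jp-4x\in(0,p)$. Since $jp-4x\equiv -4x\pmod p$ lies in $\{1,\dots,p-1\}$, it equals $\psi(x)$ exactly as an integer, giving $\psi(x)=jp-4x\equiv j\pmod 4$ because $p\equiv 1\pmod 4$. Thus $\psi(x)\in J_j$, i.e. $\psi(I_{j-1})\subseteq J_j$. Since $\psi$ is a bijection of $\Z_p^*$ and both $\{I_{j-1}\}_{j}$ and $\{J_j\}_{j}$ partition $\Z_p^*$, these four inclusions are forced to be equalities. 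Combining this with the fact that $\psi$ fixes $B_{i-1}$ completes the proof.

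The main point requiring care is the index bookkeeping, and in particular the role of the sign. It is $-4$, not $4$, that fixes the classes $B_i$: multiplication by $4$ alone shifts the quartic class by $2$ and would only yield a row-permuted version of the statement, so the factor $-1$ is exactly what cancels that shift. At the same time, the reduction $\psi(x)=jp-4x\equiv j\pmod4$ must be verified to land in $J_j$ with matching index $j$ (including the boundary case $j=4$, where $\psi(I_3)$ lands in the class $\equiv 0\pmod4$, which is $J_4$); I expect this alignment of the additive index of $I_{j-1}$ with the residue index of $J_j$ to be the only subtle step.
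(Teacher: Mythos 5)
Your proof is correct, and it rests on the same underlying map that the paper uses, namely $x\mapsto -4x \pmod p$; but your verification is organised quite differently, and more efficiently. The paper fixes a generator with $2\equiv g^k \pmod p$, $k\equiv 1\pmod 4$, factors the map as two doublings followed by negation, and tracks the image of each set $B_0\cap I_{j-1}$ through the intermediate stages (into $B_1$, then $B_2$, then back to $B_0$), treating only the first row explicitly and disposing of the remaining rows and of the case $k\equiv 3\pmod 4$ by analogy. You instead isolate the single group-theoretic fact that makes everything work --- $-4\in B_0$, because $4$ and $-1$ both lie in $B_2$ --- so that multiplication by $-4$ preserves every coset $B_{i-1}$ simultaneously; this removes all reference to the generator (hence no case split) and handles all four rows at once. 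Your treatment of the intervals is also more direct: for $x\in I_{j-1}$ the integer representative of $-4x$ is exactly $jp-4x$, which is congruent to $j$ modulo $4$ since $p\equiv 1\pmod 4$, and the injectivity-plus-partition count then upgrades the inclusions $\psi(I_{j-1})\subseteq J_j$ to equalities, so that $\psi$ restricts to a bijection from $B_{i-1}\cap I_{j-1}$ onto $B_{i-1}\cap J_j$. What the paper's step-by-step bookkeeping buys is the explicit intermediate data (e.g.\ that $B_0\cap I_0$ doubles onto the even elements of $B_1\cap(I_0\cup I_1)$), in the same style as the doubling arguments in the proof of Theorem \ref{squares}; your version trades that for brevity and uniformity, and in particular makes it transparent that the proposition does not depend on the choice of generator.
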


For example, consider $p=109$. The quartic residues, collected in the intervals $I_0,\dots,I_3$ are:
\[
1,3,5,7,9,15,16,21,22,25,26,27\ | | \ 35,38,45,48,49\ | | \ 63,66,73,75,78,80,81\ | | \ 89,97,105,\]
while collected in the intervals $J_1,\dots,J_4$, they are:
\[
1,5,7,21,25,45,49,73,81,89,97,105\ | | \ 22,26,38,66,78\ | | \ 3,7,15,27,35,63,75\ | | \ 16,48,80.\]
The collections are different, but as the Proposition asserts, the cardinalities are the same.

\begin{proof}[Proof of Proposition \ref{mod}]  We only treat the first row of $M$; the other rows are dealt with in exactly the same manner. First suppose that the generator $g$ is such that $2\equiv g^k \pmod p$ where $k\equiv1 \pmod 4$. We will show that working modulo $p$, the map $x\mapsto -4x$ establishes the required bijections.

Recall that $M_{11}=| B_0 \cap I_0 |$. Multiplying the  set $B_0 \cap I_0$ by 2 gives the even elements of $B_1\cap(I_0\cup I_1)$. Multiplying  again by 2, we obtain the elements of $B_2$ that are divisible by 4. Finally, multiplying the latter set by $-1$ and adding $p \equiv 1 \pmod{4}$ to the products we obtain the elements of $B_0$ congruent to $1\pmod 4$, as asserted by the proposition. Symbolically, we can represent the above bijective transformations in the following way:
\[
 B_0 \cap I_0\   \xrightarrow[\quad\cong\quad]{ \quad 2x \quad}\  B_1 \cap (I_0 \cup I_1)\cap (J_2\cup J_4)\   \xrightarrow[\quad\cong\quad]{ \quad 2x \quad}\  B_2\cap J_4\ \xrightarrow[\quad\cong\quad]{ \quad -x+p \quad}\  B_0\cap J_1.
\]
Using the above convention, we obtain concerning $M_{12}$ :
\[
 B_0 \cap I_1\   \xrightarrow[\quad\cong\quad]{ \quad 2x \quad}\  B_1 \cap (I_2 \cup I_3)\cap (J_2\cup J_4)\   \xrightarrow[\quad\cong\quad]{ \quad 2x-p \quad}\  B_2\cap J_3\ \xrightarrow[\quad\cong\quad]{ \quad -x+p \quad}\  B_0\cap J_2.\]
Similarly concerning $M_{13}$ and then $M_{14}$ we get:
\[
 B_0 \cap I_2\   \xrightarrow[\quad\cong\quad]{ \quad 2x-p \quad}\   B_1 \cap (I_0 \cup I_1)\cap (J_1\cup J_3)\  \xrightarrow[\quad\cong\quad]{ \quad 2x \quad}\  B_2\cap J_2 \ \xrightarrow[\quad\cong\quad]{ \quad -x+p \quad}\  B_0\cap J_3,\]
\[
 B_0 \cap I_3\  \xrightarrow[\quad\cong\quad]{ \quad 2x-p \quad}\   B_1 \cap (I_2 \cup I_3)\cap (J_1\cup J_3)\  \xrightarrow[\quad\cong\quad]{ \quad 2x-p \quad}\  B_2\cap J_1\ \xrightarrow[\quad\cong\quad]{ \quad -x+p \quad}\  B_0\cap J_4.
\]
This establishes the proposition when $2\equiv g^k \pmod p$ with $k\equiv1 \pmod 4$.
 The case where  $k\equiv3 \pmod 4$ is treated in exactly the same manner, with $B_1$ replaced by $B_3$.
\end{proof}

\section{The sum of the quartic residues}\label{ss}

In this section, as an application of Theorem \ref{squares}, we show that the sum of the quartic residues can be computed from the entries in the first row of $M$. To place this result in context, let us first recall that some well known facts about sums of quartic residues.

Suppose that $p\equiv 3 \pmod 4$. Then every quadratic residue is a quartic residue. Indeed,  the map $x\mapsto p-x$ interchanges the quadratic residues with the nonquadratic residues. So if in $\Z_p$ we have $a\equiv b^2 $, then either $b$ is a quadratic residue, in which case $a$ is quartic residue, or $b$ is not a quadratic residue, in which case $p-b$ is a quadratic residue and $a\equiv (p-b)^2$, and once again $a$ is a quartic residue. 
In particular, the sum of the quartic residues equals the sum of the quadratic residues, and we can use V.-A.~Lebesgue's results  \cite{AC}.

 Suppose that $p\equiv 1 \pmod 8$. By Gauss, $-1$ is a quartic residue. So the map $x\mapsto p-x$ fixes the quartic residues. Thus, for each quartic residue $a$, the element $p-a$ is also a quartic residue and the sum of this pair is therefore $p$. So the sum of the quartic residues is $p/2$ times the number $(p-1)/4$ of the quartic residues. Thus
the sum of the quartic residues is $p(p-1)/4$.

The above comments show that the interesting case is when $p\equiv 5\pmod 8$. 

\begin{theorem}\label{sums}  For all primes $p$ with $p\equiv 5\pmod 8$, the sum in $\Z$ of the quartic residues is
\[
\frac{p}5(M_{11}+2M_{12}+3M_{13}+4M_{14}).
\]
\end{theorem}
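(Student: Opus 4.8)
The plan is to exploit the fact, recorded in the remark following the proof of Theorem~\ref{squares}, that multiplication by $-4$ permutes the set of quartic residues. Write $S$ for the sum in $\Z$ of the quartic residues, each represented by its value in $\{1,\dots,p-1\}$, and let $\phi\colon x\mapsto -4x \bmod p$ denote the induced bijection of $B_0$ onto itself. Because $\phi$ is a bijection, $\sum_{x\in B_0}\phi(x)=\sum_{y\in B_0}y=S$, so the entire argument reduces to re-expressing the left-hand side and comparing.

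The key computation is to evaluate $\phi(x)$ explicitly according to which interval contains $x$. First I would show that if $x\in B_0\cap I_j$, so that $\tfrac{jp}4<x<\tfrac{(j+1)p}4$, then $jp<4x<(j+1)p$; since $0<x<p$ and $4<p$, the number $4x$ is never a multiple of $p$, so its reduction modulo $p$ is exactly $4x-jp$. Consequently
\[
\phi(x)=-4x\bmod p=(j+1)p-4x,
\]
and the inequalities above confirm that this representative does lie in $\{1,\dots,p-1\}$. It is precisely this appearance of the interval index $j$ that will generate the weights $1,2,3,4$ in the final formula.

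Next I would sum this identity over all quartic residues, grouping them by the interval $I_j$ in which they lie and recalling that $|B_0\cap I_j|=M_{1,(j+1)}$. This yields
\[
S=\sum_{x\in B_0}\phi(x)=\sum_{j=0}^3\sum_{x\in B_0\cap I_j}\bigl((j+1)p-4x\bigr)=p\sum_{j=0}^3(j+1)\,|B_0\cap I_j|-4S.
\]
Rearranging gives $5S=p\sum_{j=0}^3(j+1)M_{1,(j+1)}=p\bigl(M_{11}+2M_{12}+3M_{13}+4M_{14}\bigr)$, and dividing by $5$ produces the claimed expression.

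I do not expect a genuine obstacle here: once the bijection $\phi$ is in hand, everything collapses to a single linear equation in $S$. The only point demanding care is the explicit reduction $\phi(x)=(j+1)p-4x$, since an off-by-one slip in the boundaries of $I_j$ or in the modular reduction would corrupt the integer coefficients $1,2,3,4$; as a sanity check, the formula can be verified directly against the prime $p=109$, where it returns $\tfrac{109}{5}(12+10+21+12)=1199$, matching the sum of the listed quartic residues.
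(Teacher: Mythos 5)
Your proof is correct, and it takes a genuinely different route from the paper's. The paper derives the formula as an application of Theorem \ref{squares}: choosing $g$ with $2\equiv g^k\pmod p$ and $k\equiv 1\pmod 4$, it tracks the sums $b_i$ of all four cosets $B_i$ under the doubling map $x\mapsto 2x$, obtains the four relations $2b_i=b_{i+1}+p(M_{(i+1)3}+M_{(i+1)4})$ (indices of $b$ taken mod $4$), rewrites the entries of rows $2$, $3$ and $4$ in terms of the first row using the Latin square structure, and then solves the resulting $4\times 4$ linear system for $b_0$. You instead use the single map $x\mapsto -4x\bmod p$, which permutes $B_0$ (the fact recorded in the remark following the proof of Theorem \ref{squares}), and everything collapses to the one equation $S=p(M_{11}+2M_{12}+3M_{13}+4M_{14})-4S$; your interval-by-interval reduction $\phi(x)=(j+1)p-4x$ on $B_0\cap I_j$ is justified correctly, and the numerical check at $p=109$ is accurate. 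Your route is shorter and logically lighter: it makes no use of Theorem \ref{squares} at all, so the sum formula is seen to be independent of the Latin square phenomenon --- a gain in economy, though it loses the paper's advertised link between its two theorems. It is also more general: $-4$ is a fourth power modulo every prime $p\equiv 1\pmod 4$, so the same one-line argument applies when $p\equiv 1\pmod 8$, where the symmetries $M_{11}=M_{14}$ and $M_{12}=M_{13}$ (coming from $x\mapsto p-x$) reduce your formula to the pairing computation given at the start of Section \ref{ss}.
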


\begin{proof} Choose  a generator $g$ of  $\Z_p^*$ such that $2\equiv g^k \pmod p$  where $k\equiv1 \pmod 4$, and let $M$ be the resulting matrix, as in Theorem \ref{squares}.  Let $b_i$ denote the sum of the elements of $B_i$, for $i=0,\dots,3$.  Consider multiplication of the elements of $B_{0}$ by 2. Modulo $p$, the set $B_{0}$ is sent bijectively to $B_1$. In fact, the elements of $B_{0}\cap (I_1\cup I_2)$ are sent directly to elements of $B_{1}$, while for each element $x\in B_{0}\cap (I_3\cup I_4)$, one has $2x-p \in B_{1}$. Since there are $M_{13}+M_{14}$ elements in $B_{0}\cap (I_3\cup I_4)$, we have
\[ b_{1}= 2b_0-p(M_{13}+M_{14}).
\]
Similarly,
\[ b_{2}= 2b_1-p(M_{23}+M_{24}),\quad b_{3}= 2b_2-p(M_{33}+M_{34})
,\quad  b_{0}= 2b_3-p(M_{43}+M_{44}).
\]
Rearranging, and applying  Theorem 1, we have
\begin{align*}
2b_0&= b_1+p(M_{13}+M_{14})\\
2b_1&= b_2+p(M_{14}+M_{12})\\
2b_2&= b_3+p(M_{12}+M_{11})\\
2b_3&= b_0+p(M_{11}+M_{13}).
\end{align*}
Solving this system of 4 equations for $b_0$ gives the required result.
\end{proof}

\providecommand{\bysame}{\leavevmode\hbox to3em{\hrulefill}\thinspace}
\providecommand{\MR}{\relax\ifhmode\unskip\space\fi MR }

\providecommand{\MRhref}[2]{%
  \href{http://www.ams.org/mathscinet-getitem?mr=#1}{#2}
}
\providecommand{\href}[2]{#2}

\end{document}